\definecolor{MyLinkColor}{rgb}{0,0,0.4}
\newcommand{\R}{{\mathbb R}}
\newcommand{\kB}{\mathcal{B}}
\newcommand{\cO}{\mathcal{O}}
\newcommand{\kL}{\mathcal{L}}
\newcommand{\re}{\mathop{\rm Re}\nolimits}
\newcommand{\ov}{\overline}
\newcommand{\p}{\partial}
\newcommand{\0}{\Omega}
\newtheorem{thm}{Theorem}[section]
\newtheorem{lemma}[thm]{Lemma}
\theoremstyle{remark} 
\newtheorem{defin}[thm]{Definition}
\numberwithin{equation}{section}
\begin{document}

\title[Weak-strong uniqueness]{Weak-strong uniqueness for a class of degenerate parabolic cross-diffusion systems}

\thanks{}
\author{Philippe Lauren\c{c}ot}
\address{Institut de Math\'ematiques de Toulouse, UMR~5219, Universit\'e de Toulouse, CNRS \\ F--31062 Toulouse Cedex 9, France}
\email{laurenco@math.univ-toulouse.fr}
\author{Bogdan-Vasile Matioc}
\address{Fakult\"at f\"ur Mathematik, Universit\"at Regensburg \\ D--93040 Regensburg, Deutschland}
\email{bogdan.matioc@ur.de}

\begin{abstract}
Bounded weak solutions to a particular class of degenerate parabolic cross-diffusion systems are shown to coincide with the unique strong solution determined by the same initial condition on the maximal existence interval of the latter. The proof relies on an estimate established for a relative entropy associated to the system.
\end{abstract}

\subjclass[2020]{35A02; 35K51; 35K65; 35Q35}
\keywords{Cross diffusion; Weak-strong uniqueness; Relative entropy}

\maketitle

\pagestyle{myheadings}
\markboth{\sc{Ph.~Lauren\c cot \& B.-V.~Matioc}}{\sc{Weak-strong uniqueness}}

 \section{Introduction}\label{Sec:1}

Let $\Omega$ be a bounded domain of $\mathbb{R}^N$, $N\ge 1$, with smooth boundary $\partial\Omega$ and outer unit normal $\mathbf{n}$, and assume that the constants~$a,\, b,\, c,$ and $d$ satisfy 
 \begin{equation}\label{condabcd}
 	(a,\, b,\, c,\, d) \in(0,\infty)^4 \qquad\text{and}\qquad ad>bc\,.
 \end{equation}
 We consider the evolution equations 
\begin{subequations}\label{PB}
\begin{equation}\label{PB1ab}	
	\left.\begin{array}{rcrr}
		\partial_t f \!\!\!& =  &\!\!\!\mathrm{div}\big(f \nabla[ af + b g ] \big)   \\[1ex]
		\partial_t g  \!\!\!& =  &\!\!\!\mathrm{div}\big(g \nabla[ cf + d g] \big)
	\end{array}
	\right\}\;\text{ in }\; (0,\infty)\times \Omega\,,
	\end{equation}
 supplemented with homogeneous Neumann boundary conditions
\begin{equation}
	\nabla f\cdot \mathbf{n} = \nabla g\cdot \mathbf{n} = 0 \;\text{ on }\; (0,\infty)\times \partial\Omega\,, \label{PB1c}
\end{equation}
and non-negative initial conditions
\begin{equation}
	(f,g)(0) = (f^{in},g^{in}) \;\text{ in }\; \Omega\,. \label{PB1d}
\end{equation}
\end{subequations}
The porous medium equation \cite{Va07} as well as the thin film Muskat problem \cite{EMM12} arise as special cases of~\eqref{PB1ab}. 

We point out that \eqref{PB1ab} is a quasilinear degenerate parabolic system with a  full diffusion matrix, so that the study of its well-posedness is already a challenging issue. On the one hand, owing to its parabolic structure, the system~\eqref{PB} fits into the theory developed in \cite{Am93}, from which the local existence and uniqueness of a strong solution with positive components can be inferred, see Theorem~\ref{T:WP} below. However, comparison principles cannot be applied in the context of~\eqref{PB} and the degeneracy featured in~\eqref{PB1ab} might lead to the breakdown of the positivity of the components in finite time and thus to that of their regularity. As a consequence, strong solutions cannot be extended beyond a finite time in general. On the other hand, non-negative global weak solutions to~\eqref{PB}, which are also bounded, are constructed in \cite{LM22,LM22yy}, but the uniqueness of  such solutions is an open problem, even in dimension $N=1$. This is in sharp contrast with the porous  medium equation for which several uniqueness results for weak solutions are available in the literature, see \cite{AL1983, BCP1984, BC1979, 0t1996, Pi1982, Va07, LM2021a} and the references therein. It is actually the strong coupling in~\eqref{PB1ab} which makes it difficult to generalize the methods from the above references to this two-phase version of the porous medium equation.

The goal of this paper is to prove a weaker result, namely that all bounded weak solutions to~\eqref{PB} coincide on some time interval on which a strong solution exists. 
For that purpose, we shall rely on the availability of a suitable relative entropy functional, an idea which has proved instrumental in several recent works 
on weak-strong uniqueness/stability results for (systems of) partial differential equations. 
In particular, this method has been applied in various settings such as: the compressible  Navier-Stokes  system~\cite{FJN2012} and  the Fourier-Navier-Stokes  system~\cite{FN2012}, the (isentropic) Euler equations \cite{GSW15, BLS11}, hyperbolic-parabolic systems~\cite{CT2018}, the Navier-Stokes-Korteweg and the Euler-Korteweg systems~\cite{BGL2019, GLT17}, the Navier-Stokes equation  with surface tension~\cite{FH2020}, (reaction-)cross-diffusion systems~\cite{JC2019, JPZ2022}, entropy-dissipating reaction-diffusion equations~\cite{F2017},  energy-reaction-diffusion systems~\cite{H2022}, and Maxwell-Stefan systems~\cite{HJT22x}. 

Before stating precisely our main result, let us first make precise the meaning of weak and strong solutions to~\eqref{PB}. 

\begin{defin}[Bounded weak solution]\label{D:1}
	Assume \eqref{condabcd} and let $ u^{in} := (f^{in},g^{in}) $ be an element of~${L_{\infty,+}(\Omega,\mathbb{R}^2)}$. Given $T\in (0,\infty]$, a bounded weak solution $u$ to~\eqref{PB} on $[0,T)$ is a
	 pair of functions~${u=(f,g)}$ such that:
	\begin{itemize}
		\item[(i)] for each $t\in (0,T)$, 
		\begin{equation}
			(f,g)\in L_{\infty,+}((0,t)\times \Omega,\mathbb{R}^2) \cap L_2((0,t),H^1(\Omega,\mathbb{R}^2))\cap W_2^1((0,t),H^1 (\Omega,\mathbb{R}^2)')\,; \label{p1}
		\end{equation}
		\item[(ii)] for all $\varphi\in H^1(\Omega)$ and $t\in (0,T)$,
		\begin{subequations}\label{p2}
			\begin{equation}
				\int_\Omega (f(t,x)-f^{in}(x)) \varphi(x)\ \mathrm{d}x  +\int_0^t \int_\Omega f(s,x) \nabla[ a f + b g ](s,x) \cdot \nabla\varphi(x)\, \mathrm{d}x\mathrm{d}s =0 \label{p2a}
			\end{equation} 
			and
			\begin{equation}
				\int_\Omega (g(t,x)-g^{in}(x)) \varphi(x)\ \mathrm{d}x +  \int_0^t \int_\Omega g(s,x) \nabla[cf + dg ](s,x) \cdot \nabla\varphi(x)\, \mathrm{d}x\mathrm{d}s =0\,. \label{p2b}
			\end{equation}
		\end{subequations} 
	\end{itemize}
\end{defin} 
Observe that the boundedness and weak differentiability required on $f$ and $g$ in~\eqref{p1} guarantee that the integrals in~\eqref{p2} are finite.

We next turn to strong solutions to~\eqref{PB} and first introduce some notation: for $p>N$ and $s\in (1+N/p,2]$, we set 
\begin{equation*}
	H^s_{p,\kB}(\0):=\{z\in H^s_{p}(\0)\,:\, \text{$\nabla z\cdot\mathbf{n}=0$ on $\p\0$} \}\,,
\end{equation*}
where $H^s_{p}(\0)$ denotes the Bessel potential space, see \cite[Section~5]{Am93} for instance, and
\begin{equation}
\cO_p^s:=\{u=(f,g)\in H^s_{p,\kB}(\0,\mathbb{R}^2)\,:\, \text{$f>0$ and $g>0$ in $\Omega$}\}\,. \label{Osp}
\end{equation}
We observe that the continuous embedding of $H^s_{p}(\0)$ in ${\rm C}^1(\ov\0)$ for $p>N$ and $s\in(1+N/p,2]$ guarantees that $\cO_p^s$ is an open subset of $H^s_{p,\kB}(\0,\mathbb{R}^2)$.

\begin{defin}[Strong solution]\label{D:2}
Assume~\eqref{condabcd} and let $p>N$, $s\in (1+N/p,2)$, ${T\in (0,\infty]}$, and $u^{in}=(f^{in},g^{in})\in \cO_p^s$. A strong solution $u$ to~\eqref{PB} on $[0,T)$ is a pair~${u=(f,g)}$ such that
\begin{equation*}	
	 u\in {\rm C}([0,T), \cO_p^s)\cap {\rm C}^1((0,T), L_p(\0,\mathbb{R}^2))\cap {\rm C}((0,T),  H^2_{p,\kB}(\0,\mathbb{R}^2))\,,
\end{equation*}
which satisfies~\eqref{PB} in a strong sense (and in particular a.e. in $(0,T)\times\Omega$).
\end{defin}
One may easily check that a strong solution to~\eqref{PB} on $[0,T)$ in the sense of Definition~\ref{D:2} is also a bounded weak solution on $[0,T)$ in the sense of Definition~\ref{D:1}.

\bigskip

The aim of this paper is to establish a weak-strong uniqueness result for~\eqref{PB} as stated in Theorem~\ref{MT1} below. 
As in \cite{F2017}, the main tool to be used in the proof is the relative entropy functional
\begin{equation}\label{reen}
	H(u_1|u_2):=\int_\Omega \left\{ \bigg[f_1\ln\Big(\frac{f_1}{f_2}\Big)-(f_1-f_2)\bigg]+ \cfrac{b }{c} \bigg[ g_1 \ln\Big(\frac{g_1}{g_2}\Big) - (g_1-g_2) \bigg] \right\}\,\mathrm{d}x\,,
\end{equation}
which is well-defined for $u_i=(f_i,g_i)\in L_{2,+}(\Omega,\mathbb{R}^2)$, $i=1,\, 2$, provided that $f_2$ and $g_2$ are bounded from below by positive constants.
It is important to remark that $H(u_1|u_2)$ controls 
the square of the $L_2$-norm of $u_1-u_2$,  see~\eqref{eq:L2} below, if  $u_1$ and $u_2$ are additionally bounded functions. 

The main step in the proof of Theorem~\ref{MT1} is to derive the integral inequality~\eqref{DES} which measures the ``distance'' between a bounded weak solution in the sense of Definition~\ref{D:1} and a strong solution in the sense of Definition~\ref{D:2}. Gronwall's inequality then provides the weak-strong uniqueness property for the evolution problem~\eqref{PB}.

\begin{thm}\label{MT1}
	Consider $u_1^{in}\in L_{\infty,+}(\Omega,\mathbb{R}^2)$ and $u_2^{in}\in \mathcal{O}_p^s$ for some $s\in (1+N/p,2)$ and~${p>N}$.
	 Let $u_2=(f_2,g_2) $  be the strong solution to~\eqref{PB} with initial condition $u_2^{in}$ defined on its maximal existence interval $[0,T^+)$, $T^+\in (0,\infty]$, see Theorem~\ref{T:WP} below.
	  If~${u_1=(f_1,g_1)}$  is a bounded weak solution to~\eqref{PB} on $[0,T^+)$ with initial condition $u_1^{in}$ and~${T\in(0,T^+),}$  there exists a positive constant~${C=(a,b,c,d,u_1,u_2,T)}$ such that 
	\begin{equation}\label{DES}
		H(u_1(t)|u_2(t))\leq  H(u_1^{in}|u_2^{in})+C \int_0^t H(u_1(s)|u_2(s))\, \mathrm{d}s\qquad\text{for all $t\in[0,T]$\,.}
	\end{equation}
	In particular, if $u_1^{in}=u_2^{in}$, then $u_1(t)=u_2(t)$ for all $t\in[0,T^+)$. 
\end{thm}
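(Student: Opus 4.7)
The plan is to carry out a relative entropy calculation in the spirit of the references cited after~\eqref{reen}. Formally, differentiating $H(u_1|u_2)$ in time and using the chain rule leads to
\begin{align*}
\frac{d}{dt} H(u_1|u_2) &= \int_\Omega \partial_t f_1 \ln\!\Bigl(\frac{f_1}{f_2}\Bigr)\mathrm{d}x + \int_\Omega \partial_t f_2\Bigl(1-\frac{f_1}{f_2}\Bigr)\mathrm{d}x \\
&\quad + \frac{b}{c}\int_\Omega \partial_t g_1\ln\!\Bigl(\frac{g_1}{g_2}\Bigr)\mathrm{d}x + \frac{b}{c}\int_\Omega \partial_t g_2\Bigl(1-\frac{g_1}{g_2}\Bigr)\mathrm{d}x.
\end{align*}
The strategy is to test the weak formulation~\eqref{p2a}--\eqref{p2b} for $u_1$ against $\ln(f_1/f_2)$ and $(b/c)\ln(g_1/g_2)$, and to multiply the strong equations for $u_2$ by $1-f_1/f_2$ and $(b/c)(1-g_1/g_2)$. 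Summation after integration by parts yields an integrated identity of the form
$$
H(u_1(t)|u_2(t)) - H(u_1^{in}|u_2^{in}) + \int_0^t\!\!\int_\Omega Q(s,x)\,\mathrm{d}x\mathrm{d}s = \int_0^t\!\!\int_\Omega R(s,x)\,\mathrm{d}x\mathrm{d}s,
$$
where $Q$ is a dissipative quadratic form and $R$ collects lower-order cross terms.

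The algebraic core of the argument is that the weight $b/c$ in~\eqref{reen} is chosen precisely to symmetrize the cross-diffusion coupling, giving
$$
Q = a|\nabla(f_1-f_2)|^2 + 2b\,\nabla(f_1-f_2)\cdot\nabla(g_1-g_2) + \frac{bd}{c}|\nabla(g_1-g_2)|^2,
$$
which is the quadratic form associated with $M = \begin{pmatrix} a & b \\ b & bd/c \end{pmatrix}$. Since $\det M = b(ad-bc)/c > 0$ by~\eqref{condabcd}, $M$ is positive definite and $Q \geq \lambda\bigl(|\nabla(f_1-f_2)|^2+|\nabla(g_1-g_2)|^2\bigr)$ for some $\lambda>0$. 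The residual $R$ is a sum of terms of the type
$$
\int_\Omega\frac{f_1-f_2}{f_2}\nabla f_2\cdot\bigl[a\nabla(f_1-f_2)+b\nabla(g_1-g_2)\bigr]\mathrm{d}x
$$
and its $g$-analogue. To control them I invoke the regularity of $u_2$: since $u_2\in {\rm C}([0,T],\cO_p^s)$, the embedding $H^s_p(\Omega)\hookrightarrow {\rm C}^1(\overline{\Omega})$ and the positivity of the components of $u_2$ on the compact set $[0,T]\times\overline{\Omega}$ imply that $\nabla f_2/f_2$ and $\nabla g_2/g_2$ are bounded on $[0,T]\times\Omega$. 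A Young inequality with $\lambda$-weight then converts $R$ into a fraction of $Q$ (absorbed on the left-hand side) plus a term of order $C\int_\Omega((f_1-f_2)^2+(g_1-g_2)^2)\,\mathrm{d}x$. Finally, the $L_\infty$ bounds on $u_1$ combined with the uniform positivity of $u_2$ yield the pointwise inequality $(r-s)^2\leq C_\infty[r\ln(r/s)-(r-s)]$ valid for $r\in[0,M]$ and $s\in[m,M]$ with $0<m<M$; this integrates to the $L_2$-control $\int_\Omega((f_1-f_2)^2+(g_1-g_2)^2)\mathrm{d}x\leq CH(u_1|u_2)$ announced after~\eqref{reen}, and collecting all estimates yields~\eqref{DES}.

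The principal technical obstacle is that $\ln(f_1/f_2)$ is \emph{not} an admissible test function in~\eqref{p2}: as $f_1$ may vanish on a set of positive measure, $\ln f_1$ need not belong to $H^1(\Omega)$. My remedy is to regularize by replacing $\ln(f_1/f_2)$ with $\ln((f_1+\delta)/f_2)$ for $\delta>0$, which does lie in $H^1(\Omega)$ thanks to~\eqref{p1}, the uniform positivity of $f_2$, and the Lipschitz character of $\ln$ on $[\delta,\infty)$; an analogous regularization is performed for $g_1$. Combined with the interpretation of $\int_0^t\langle\partial_t f_1,\cdot\rangle_{(H^1)',H^1}\,\mathrm{d}s$ afforded by~\eqref{p1} and a chain-rule argument to handle the time dependence of the regularized test function, this produces a rigorous version of the formal identity above with $\delta$-dependent correctors that vanish as $\delta\to 0^+$ by dominated convergence, using the $L_\infty$ bounds on $u_1$ and the uniform boundedness of $1/f_2,1/g_2$ on $[0,T]\times\Omega$. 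Once~\eqref{DES} is established on an arbitrary $[0,T]\subset[0,T^+)$, Gronwall's lemma gives $H(u_1(t)|u_2(t))\leq e^{Ct}H(u_1^{in}|u_2^{in})$, and specializing to $u_1^{in}=u_2^{in}$ and invoking the $L_2$-control above forces $u_1=u_2$ throughout $[0,T^+)$.
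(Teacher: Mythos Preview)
Your proposal is correct and follows the same overall architecture as the paper: regularize $\ln f_1$ by $\ln(f_1+\delta)$ (the paper uses $\eta$), differentiate the regularized relative entropy via a Lions--Magenes-type chain rule, pass to the limit $\delta\to 0$ by dominated convergence, and close with the quadratic lower bound $x\ln(x/y)-(x-y)\ge \tfrac12(x-y)^2/\max\{x,y\}$ together with Gronwall.

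Where you differ is in the algebraic treatment of the dissipation. Writing $F=f_1-f_2$, $G=g_1-g_2$, the identity you obtain after passing to the limit is
\[
H(u_1(t)|u_2(t))-H(u_1^{in}|u_2^{in}) = -\int_0^t\!\!\int_\Omega Q\,\mathrm{d}x\mathrm{d}s + \int_0^t\!\!\int_\Omega R\,\mathrm{d}x\mathrm{d}s,
\]
with $Q=(\nabla F,\nabla G)M(\nabla F,\nabla G)^\top$ for $M=\begin{pmatrix}a&b\\ b&bd/c\end{pmatrix}$, and you exploit $\det M=b(ad-bc)/c>0$ plus Young's inequality to absorb $R$. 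The paper instead rewrites the same quantity by grouping the variables as $f_1+(b/a)g_1$ and completes the square twice, splitting $T^2=T^2_I+T^2_{II}$; this yields explicit remainder terms of the form $|(f_1-f_2)\nabla f_2/f_2|^2$ and $|(g_1-g_2)\nabla g_2/g_2|^2$ directly, without invoking a coercivity constant $\lambda$. Your route is more transparent conceptually (positive definiteness of a $2\times 2$ matrix), while the paper's route gives sharper explicit constants and avoids the absorption step. Both lead to~\eqref{DES}.

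One point you leave implicit: the ``chain-rule argument'' you invoke is nontrivial because the regularized test function $\ln((f_1+\delta)/f_2)$ depends on time through both $f_1$ and $f_2$. The paper isolates this as a separate Lions--Magenes-type lemma (for $\Phi\in C^2$ with $\Phi''$ bounded), proved by density of ${\rm C}^\infty([0,T],H^1(\Omega))$ in $L_2((0,T),H^1)\cap W^1_2((0,T),(H^1)')$; you would need the same ingredient.
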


We emphasize that Theorem~\ref{MT1} applies to any pair of initial conditions $u_1^{in}\in L_{\infty,+}(\Omega,\mathbb{R}^2)$ and $u_2^{in}\in \mathcal{O}_p^s$ for some $s\in (1+N/p,2)$ and $p>N$. Indeed, the existence of a bounded weak solution to~\eqref{PB} on $[0,\infty)$ with initial condition $u_1^{in}$ follows from \cite{LM22,LM22yy}, while that of a strong solution to~\eqref{PB} on some maximal time interval with initial condition $u_2^{in}$ is provided in Theorem~\ref{T:WP} below.

\section{Proof of the main result}\label{Sec:2}

We start this section by considering the evolution problem~\eqref{PB}  in the setting of strong solutions as specified in Definition~\ref{D:2}. Using the quasilinear parabolic theory developed in \cite{Am93}, we then prove in Theorem~\ref{T:WP} that \eqref{PB} is well-posed in this strong setting. The remaining part is then devoted to the proof of Theorem~\ref{MT1}. 

\subsection{Strong solutions to the evolution problem~\eqref{PB}}\label{Sec:2a}

In order to construct strong solutions to~\eqref{PB} we reformulate \eqref{PB1ab} in a suitable framework. For that purpose, we fix~$p>N$ and $s\in (1+N/p,2)$  and introduce the mobility matrix
\begin{equation}
	M(X)  =  (m_{jk}(X))_{1\le j,k\le 2} := \begin{pmatrix}
		a X_1& b X_1 \\ 
		c X_2 & d X_2
	\end{pmatrix}\,, \qquad X=(X_1,X_2)\in \mathbb{R}^2\,. \label{x1}
\end{equation}
The problem ~\eqref{PB} can then be recast as
\begin{equation}\label{NNEP}
\frac{\mathrm{d}u}{\mathrm{d}t}(t)=\Phi(u(t))[u(t)],\qquad u(0)=u^{in}\,,
\end{equation}
where the quasilinear operator $\Phi:\cO_p^s\to\kL(H^2_{p,\kB}(\0,\mathbb{R}^2), L_p(\0,\mathbb{R}^2))$ is defined by the relation 
\[
\Phi(u)[v]:=\mathrm{div}(M(u)\nabla v)=\sum_{i=1}^N\p_i(M(u)\p_i v),\qquad u\in\cO_p^s,\, v\in H^2_{p,\kB}(\0,\mathbb{R}^2)\,.
\]
Observing that, for $u\in\mathcal{O}_p^s$, the matrix-valued function $M(u)$ belongs to ${\rm C}^1(\ov\0,\R^{2\times 2})$ and that  $M(u(x))$, $x\in \ov\0$,  has its spectrum contained in the right-half plane~${\{\re z>0\}}$, we infer from \cite[Theorem~4.1  and Example~4.3~(e)]{Am93} that~$\Phi(u)$ is, for each $u\in\cO_p^s,$ the generator of an analytic semigroup in $\kL(L_p(\0,\mathbb{R}^2))$.
Since
\[
[L_p(\0), H^{2}_{p,\kB}(\0)]_{s/2}=H^s_{p,\kB}(\0)\,,
\]
 where $[\cdot,\cdot]$ is the complex interpolation functor, see \cite[Theorem~4.3.3]{Tr78}, 
 we may now apply to~\eqref{NNEP} the quasilinear parabolic theory presented in \cite[Section~12]{Am93} (see also \cite[Remark~1.2~(ii)]{MW20})
 to obtain the following result.
 \begin{thm}\label{T:WP}
 Let $p>N$,    $s\in(1+N/p,2)$, and assume that \eqref{condabcd} is satisfied.
  Then, given~${u^{in}\in\cO_p^s}$, the problem~\eqref{PB} has a unique  maximal strong solution
 \[
 u\in {\rm C}([0,T^+), \cO_p^s)\cap {\rm C}^1((0,T^+), L_p(\0,\mathbb{R}^2))\cap {\rm C}((0,T^+),  H^2_{p,\kB}(\0,\mathbb{R}^2))\,,
 \]
 where $T^+\in(0,\infty]$ denotes the maximal existence time. 
 \end{thm}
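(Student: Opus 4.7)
The plan is to apply Amann's abstract quasilinear parabolic theory~\cite[Section~12]{Am93} to the reformulation~\eqref{NNEP} of~\eqref{PB}, using the open state space $\cO_p^s \subset H^s_{p,\kB}(\Omega,\mathbb{R}^2)$ and the base space $L_p(\Omega,\mathbb{R}^2)$ with domain $H^2_{p,\kB}(\Omega,\mathbb{R}^2)$. This requires verifying three ingredients: (i) for each frozen $u \in \cO_p^s$, the Neumann realization of the linear elliptic operator $\Phi(u) = \mathrm{div}(M(u)\nabla\,\cdot\,)$ in $L_p(\Omega,\mathbb{R}^2)$ generates an analytic semigroup; (ii) the map $u \mapsto \Phi(u)$ depends sufficiently regularly on $u$ as a bounded operator from $H^2_{p,\kB}(\Omega,\mathbb{R}^2)$ to $L_p(\Omega,\mathbb{R}^2)$; (iii) the pair $(L_p(\Omega), H^2_{p,\kB}(\Omega))$ has $H^s_{p,\kB}(\Omega)$ as its complex interpolation space of order $s/2$.

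For (i), I would exploit the explicit form of the mobility matrix in~\eqref{x1}. For every $x \in \ov\Omega$ and $u=(f,g) \in \cO_p^s$, the strict positivity of $f,g$ together with~\eqref{condabcd} gives
\[
\tr M(u(x)) = af(x) + dg(x) > 0, \qquad \det M(u(x)) = (ad-bc)f(x)g(x) > 0,
\]
so both eigenvalues of $M(u(x))$ lie in the open right half-plane $\{\re z > 0\}$, which is the normal ellipticity condition. Since the Sobolev embedding $H^s_p(\Omega) \hookrightarrow {\rm C}^1(\ov\Omega)$ (valid for $s>1+N/p$) gives $M(u) \in {\rm C}^1(\ov\Omega,\mathbb{R}^{2\times 2})$, one may invoke~\cite[Theorem~4.1 and Example~4.3~(e)]{Am93} to conclude that $\Phi(u)$ with the Neumann boundary condition generates an analytic semigroup in $\kL(L_p(\Omega,\mathbb{R}^2))$.

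For (ii), I would note that $M$ is affine in its argument, so $u \mapsto M(u)$ is a continuous linear map into ${\rm C}^1(\ov\Omega,\mathbb{R}^{2\times 2})$; the pointwise multiplication bound ${\rm C}^1 \times H^1_p \to L_p$ applied to $M(u)\nabla v$ then yields the smoothness (hence local Lipschitz continuity) of $\Phi : \cO_p^s \to \kL(H^2_{p,\kB}(\Omega,\mathbb{R}^2), L_p(\Omega,\mathbb{R}^2))$. Point~(iii) is the already-cited result~\cite[Theorem~4.3.3]{Tr78}. Assembling these three ingredients, the local-in-time existence and uniqueness of a maximal strong solution with the claimed regularity follows from~\cite[Section~12]{Am93} (as formulated in~\cite[Remark~1.2~(ii)]{MW20}), and the system~\eqref{PB} is recovered pointwise a.e.\ from the ${\rm C}^1$-in-time and $H^2$-in-space regularity.

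The main, and really only, delicate point is item (i): strict positivity of both components is indispensable, for if either $f$ or $g$ vanished somewhere, $\det M(u(x))$ would vanish too and normal ellipticity would collapse. This is precisely why the state space $\cO_p^s$ is carved out as an \emph{open} subset of $H^s_{p,\kB}(\Omega,\mathbb{R}^2)$, its openness being inherited from the embedding into ${\rm C}(\ov\Omega,\mathbb{R}^2)$. This openness is exactly what allows one to propagate the generator property from $u^{in}$ to a neighborhood and thus to a short time interval; the possible loss of this positivity is also the sole mechanism that can prevent global extension, consistent with the option $T^+ < \infty$.
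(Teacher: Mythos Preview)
Your proposal is correct and follows essentially the same route as the paper: verify that $M(u)\in{\rm C}^1(\ov\Omega,\mathbb{R}^{2\times 2})$ with spectrum in $\{\re z>0\}$ for each $u\in\cO_p^s$, invoke \cite[Theorem~4.1 and Example~4.3~(e)]{Am93} for the generator property, use the interpolation identity $[L_p,H^2_{p,\kB}]_{s/2}=H^s_{p,\kB}$ from \cite[Theorem~4.3.3]{Tr78}, and conclude via \cite[Section~12]{Am93} (and \cite[Remark~1.2~(ii)]{MW20}). Your version is in fact slightly more explicit than the paper's in two places---the trace/determinant computation for normal ellipticity and the Lipschitz dependence of $\Phi$ on $u$---but these merely spell out what the paper leaves implicit.
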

 
\subsection{Proof of Theorem~\ref{MT1}}\label{Sec:2b}

Let $T\in (0,T^+)$. Since $\{u_2(t)\ :\ t\in [0,T]\}$ is a compact subset of $\mathcal{O}_p^s$, there is $\sigma\in (0,1)$ (possibly depending on $T$) such that, for $t\in [0,T]$,
 \begin{equation}\label{bounds}
\sigma\leq \min_{x\in\overline\Omega} \min\big\{f_2(t,x),\,g_2(t,x)\big\}\quad \text{and}\quad \max\big\{\|\nabla f_2(t)\|_\infty,\,\|\nabla g_2(t)\|_\infty\big\}\leq \sigma^{-1}\,.
 \end{equation}
 Moreover, since $u_1$ is a bounded weak solution, we may assume that also  
 \begin{equation}\label{bounds2}
|u_1(t,x)|+|u_2(t,x)|\leq \sigma^{-1}\qquad\text{a.e. in $(0,T)\times\Omega$}\,.
 \end{equation}

Given $\eta\in(0,1)$, let 
\begin{align*}
H_\eta(u_1(t)|u_2(t))&:=\int_\Omega \bigg[f_1(t)\ln\Big(\frac{f_1(t)+\eta}{f_2(t)}\Big)-(f_1(t)-f_2(t))\bigg]\,\mathrm{d}x\\[1ex]
&\quad+ \cfrac{b }{c}\int_\Omega\bigg[g_1(t)\ln\Big(\frac{g_1(t)+\eta}{g_2(t)}\Big)-(g_1(t)-g_2(t))\bigg]\,\mathrm{d}x, \qquad t\in[0,T]\,.
\end{align*}
As a consequence of ~\eqref{bounds2} and of Definition~\ref{D:1}, we have $u_i(t)\in L_\infty(\Omega,\R^2)$ for $i=1,\, 2$ and all~${t\in[0,T]}$, 
and the dominated convergence theorem, together with  the lower bound in~\eqref{bounds}, yields
\begin{equation}\label{EST0}
\lim_{\eta\to0} H_\eta(u_1(t)|u_2(t)) = H(u_1(t)|u_2(t))\,, \qquad t\in [0,T]\,.
\end{equation}
Furthermore, by virtue of  Definition~\ref{D:1}, Definition~\ref{D:2},   \eqref{bounds}, \eqref{bounds2}, and the continuous embedding of $\mathcal{O}_p^s$ in ${\rm C}^1(\overline{\Omega},\mathbb{R}^2)$ we have 
\begin{equation*}
f_1,\, g_1 \in L_2((0,T), H^1(\0))\cap W^1_2((0,T), H^1(\0)') 
\end{equation*}
and
\begin{equation*}
	\ln{f_2},\, \ln{g_2} \in L_2((0,T), H^1(\0))\cap W^1_2((0,T), H^1(\0)') \,.
\end{equation*}
These  properties, together with \eqref{bounds}, \eqref{bounds2}, and suitable versions of the Lions-Magenes lemma, see, e.g., \cite[Theorem~II.5.12]{BF13} and Lemma~\ref{L:A1}, imply that 
\[
[t\mapsto H_\eta(u_1(t)|u_2(t))]:[0,T]\to\R
\]
 is continuous and
\begin{equation}\label{eq:QE}
\begin{aligned}
&H_\eta(u_1(t)|u_2(t))-H_\eta(u_1^{in}|u_2^{in})\\[1ex]
&=\int_0^t \Big\langle \p_t f_1,\ln\Big(\frac{f_1+\eta}{f_2}\Big) + \frac{f_1}{f_1+\eta}\Big\rangle_{(H^1)', H^1}\, \mathrm{d}s 
- \int_0^t\Big\langle \p_t f_2,\frac{f_1}{f_2}\Big\rangle_{(H^1)', H^1}\, \mathrm{d}s\\[1ex]
&\quad+\cfrac{b}{c} \int_0^t\Big\langle \partial_t g_1,\ln\Big(\frac{g_1+\eta}{g_2}\Big) + \frac{g_1}{g_1+\eta}\Big\rangle_{(H^1)', H^1} \, \mathrm{d}s 
- \cfrac{b}{c} \int_0^t \Big\langle \p_t f_2,\frac{f_1}{f_2}\Big\rangle_{(H^1)', H^1}\, \mathrm{d}s
\end{aligned}
\end{equation}
for all $t\in[0,T]$, where $\langle\cdot,\cdot\rangle_{(H^1)', H^1} $ is the duality bracket between $H^1(\0)$ and $H^1(\0)'$. Reformulating \eqref{eq:QE} with the help of~\eqref{p2}, we find
\begin{equation*}
\begin{aligned}
&H_\eta(u_1(t)|u_2(t))-H_\eta(u_1^{in}|u_2^{in}) \\[1ex]
&=-\int_0^t \int_\0  f_1\nabla(af_1+bg_1)\cdot\Big(\frac{\nabla f_1}{f_1+\eta}-\frac{\nabla f_2}{f_2 }\Big)\,\mathrm{d}x \mathrm{d}s\\[1ex]
&\quad-\int_0^t \int_\0  \left[ f_1\nabla(af_1+bg_1)\cdot \nabla\Big(\frac{ f_1}{f_1+\eta}\Big)-f_2\nabla(af_2+bg_2)\cdot \nabla\Big(\frac{f_1}{f_2 }\Big) \right]\,\mathrm{d}x \mathrm{d}s\\[1ex]
&\quad-\cfrac{b}{c}\int_0^t \int_\0  g_1\nabla(cf_1+dg_1)\cdot\Big(\frac{\nabla g_1}{g_1+\eta}-\frac{\nabla g_2}{g_2 }\Big)\,\mathrm{d}x \mathrm{d}s\\[1ex]
&\quad-\cfrac{b}{c}\int_0^t \int_\0 \left[ g_1\nabla(cf_1+d g_1)\cdot \nabla\Big(\frac{ g_1}{g_1+\eta}\Big) - g_2\nabla(cf_2+dg_2)\cdot \nabla\Big(\frac{g_1}{g_2 }\Big) \right]\,\mathrm{d}x \mathrm{d}s \,.
\end{aligned}
\end{equation*}
Hence,  
\begin{equation}\label{eq:QE*}
H_\eta(u_1(t)|u_2(t))-H_\eta(u_1^{in}|u_2^{in})=T^1_\eta(t)+T^2(t)\,,
\end{equation}
where
\begin{align*}
T^1_\eta(t)&:= \eta^2 \int_0^t \int_\Omega \nabla(af_1+bg_1)\cdot \frac{\nabla f_1}{(f_1+\eta)^2}\, \mathrm{d}x \mathrm{d}s\\[1ex]
&\quad +\eta^2 \cfrac{b}{c}\int_0^t \int_\Omega \nabla(cf_1+dg_1)\cdot \frac{\nabla g_1}{(g_1+\eta)^2}\, \mathrm{d}x \mathrm{d}s
\end{align*}
and 
\begin{align*}
T^2(t):=& -\int_0^t \int_\0 \left[ \nabla(af_1+bg_1)\cdot\Big( \nabla f_1 - \frac{f_1}{f_2} \nabla f_2 \Big) - f_2\nabla(af_2+bg_2)\cdot\nabla\Big(\frac{f_1}{f_2 }\Big) \right]\,\mathrm{d}x \mathrm{d}s\\[1ex]
&-\cfrac{b }{c}\int_0^t \int_\0 \left[ \nabla(cf_1+dg_1)\cdot\Big(\nabla g_1-\frac{g_1}{g_2 } \nabla g_2 \Big) -   g_2 \nabla(cf_2+dg_2)\cdot \nabla\Big(\frac{g_1}{g_2 }\Big) \right]\,\mathrm{d}x \mathrm{d}s\,.
\end{align*}
In view of Definition~\ref{D:1}~(i), both functions $\nabla(af_1+bg_1)\cdot\nabla f_1$ and $\nabla(cf_1+dg_1)\cdot\nabla g_1$ belong to~${L_1((0,t)\times\Omega)}$ and
\[
\left.
\begin{array}{r}
\displaystyle\lim_{\eta\to 0} \eta^2 \frac{\nabla f_1}{(f_1+\eta)^2} =0\\[1ex]
\displaystyle\lim_{\eta\to 0} \eta^2 \nabla\frac{ g_1}{(g_1+\eta)^2} =0 
\end{array}
\right\}\qquad\text{a.e. in $(0,t)\times\Omega$\,},
\]
as $\nabla f_1=0$ a.e. on $\{(s,x)\,:\, f_1(s,x)=0\}$ and $\nabla g_1=0$ a.e. on $\{(s,x)\,:\, g_1(s,x)=0\}.$ 
The dominated convergence theorem now implies that 
\begin{equation}\label{EST1}
\lim_{\eta\to0} T^1_\eta(t)=0\qquad\text{for all $t\in[0,T]\,.$}
\end{equation}
Hence, letting $\eta\to0$ in \eqref{eq:QE*}, we deduce from \eqref{EST0} and \eqref{EST1} that 
\begin{equation}\label{eq:QE**}
H(u_1(t)|u_2(t))-H(u_1^{in}|u_2^{in})=T^2(t)\qquad\text{for $t\in[0,T]$\,.}
\end{equation}
With respect to $T^2(t)$, we note that
\begin{equation}\label{ott}
\begin{aligned}
\frac{T^2(t)}{a}&=-\int_0^t \int_\0  \left[ \nabla f_1\cdot \nabla \Big(f_1+\frac{b}{a}g_1\Big)  +\frac{b}{a}\nabla g_1\cdot\nabla \Big(f_1+\frac{d}{c}g_1\Big) \right]\, \mathrm{d}x \mathrm{d}s\\[1ex]
&\quad+\int_0^t \int_\0 \left[ \frac{f_1}{f_2 }\nabla f_2\cdot\nabla\Big(f_1+\frac{b}{a}g_1\Big)+\frac{b}{a}\frac{g_1}{g_2 } \nabla g_2\cdot\nabla \Big( f_1 +\frac{d}{c} g_1 \Big) \right]\, \mathrm{d}x \mathrm{d}s\\[1ex]
&\quad+\int_0^t \int_\0 \left[ \nabla f_1\cdot\nabla \Big(f_2+\frac{b}{a}g_2\Big)+\frac{b}{a} \nabla g_1\cdot\nabla \Big( f_2 +\frac{d}{c} g_2 \Big) \right] \, \mathrm{d}x \mathrm{d}s\\[1ex]
&\quad-\int_0^t \int_\0 \left[ \frac{f_1}{f_2} \nabla f_2\cdot\nabla \Big(f_2+\frac{b}{a}g_2\Big)+\frac{b}{a} \frac{g_1}{g_2}\nabla g_2\cdot\nabla \Big( f_2 +\frac{d}{c} g_2 \Big) \right] \, \mathrm{d}x \mathrm{d}s\,.
\end{aligned}
\end{equation}
Introducing
\begin{equation*}
	T_I^2(t) := - \cfrac{b(ad-bc)}{ac} \int_0^t \int_\Omega \left[ |\nabla g_1|^2 - \left( 1 + \frac{g_1}{g_2} \right) \nabla g_1\cdot \nabla g_2 + \frac{g_1}{g_2} |\nabla g_2|^2 \right]\, \mathrm{d}x \mathrm{d}s
\end{equation*}
and $T_{II}^2(t) := T^2(t) - T_I^2(t)$, we note that
\begin{equation}  \label{TI}
\begin{aligned}
	T^2_I(t)& = - \frac{b (ad-bc)}{ac}\int_0^t \int_\0 \left[  \Big|\nabla g_1-\frac{1}{2}\Big(1+\frac{g_1}{g_2 }\Big) \nabla g_2\Big|^2  
	-\Big|   \frac{g_1-g_2}{2g_2 }  \nabla g_2\Big|^2  \right] \, \mathrm{d}x \mathrm{d}s \\[1ex]
		&\leq \frac{b (ad-bc)}{ac}\int_0^t \int_\0  \Big|   \frac{g_1-g_2}{2g_2 }  \nabla g_2\Big|^2  \, \mathrm{d}x\mathrm{d}s\,,
	\end{aligned}
	\end{equation}
thanks to \eqref{condabcd}. Furthermore, in view of the relation
\[
\frac{d}{c}=\frac{b}{a}+\frac{ad-bc}{ac}\,,
\]
\begin{align*}
\frac{T^2_{II}(t)}{a}&=-\int_0^t \int_\0  \Big| \nabla \Big(f_1+\frac{b}{a}g_1\Big)\Big|^2 \, \mathrm{d}x \mathrm{d}s \\[1ex]
&\quad - \int_0^t \int_\Omega \nabla\Big(f_1+\frac{b}{a}g_1\Big)\cdot \Big[\Big(1+\frac{f_1}{f_2 }\Big)\nabla f_2 +\frac{b}{a}\Big(1+\frac{g_1}{g_2 } \Big)\nabla g_2\Big]\, \mathrm{d}x\mathrm{d}s\\[1ex]
&\quad-\int_0^t \int_\0 \nabla \Big(f_2+\frac{b}{a}g_2\Big) \cdot\Big( \frac{f_1}{f_2} \nabla f_2 +\frac{b}{a} \frac{g_1}{g_2}\nabla g_2 \Big) \, \mathrm{d}x \mathrm{d}s\\[1ex]
&=-\int_0^t \int_\0  \bigg| \nabla \Big(f_1+\frac{b}{a}g_1\Big)-\frac{1}{2}\Big[\Big(1+\frac{f_1}{f_2 }\Big)\nabla f_2 +\frac{b}{a}\Big(1+\frac{g_1}{g_2 } \Big)\nabla g_2\Big]\bigg|^2\, \mathrm{d}x \mathrm{d}s\\[1ex]
&\quad+ \frac{1}{4} \int_0^t \int_\0\Big| \Big(1+\frac{f_1}{f_2 }\Big)\nabla f_2 +\frac{b}{a}\Big(1+\frac{g_1}{g_2 } \Big)\nabla g_2\Big|^2 \, \mathrm{d}x \mathrm{d}s\\[1ex]
&\quad-\int_0^t \int_\0 \nabla \Big(f_2+\frac{b}{a}g_2\Big) \cdot\Big( \frac{f_1}{f_2} \nabla f_2 +\frac{b}{a} \frac{g_1}{g_2}\nabla g_2 \Big) \, \mathrm{d}x \mathrm{d}s \,.
\end{align*}
Observing that
\begin{align*}
	& \frac{1}{4} \Big| \Big(1+\frac{f_1}{f_2 }\Big)\nabla f_2 +\frac{b}{a}\Big(1+\frac{g_1}{g_2 } \Big)\nabla g_2\Big|^2 - \nabla\Big(f_2+\frac{b}{a}g_2\Big) \cdot\Big( \frac{f_1}{f_2} \nabla f_2 +\frac{b}{a} \frac{g_1}{g_2}\nabla g_2 \Big) \\[1ex]
	&\quad = \frac{1}{4} \Big| \nabla\Big( f_2 + \frac{b}{a} g_2\Big) + \frac{f_1}{f_2 } \nabla f_2 + \frac{b}{a} \frac{g_1}{g_2} \nabla g_2\Big|^2 - \nabla\Big(f_2+\frac{b}{a}g_2\Big) \cdot\Big( \frac{f_1}{f_2} \nabla f_2 +\frac{b}{a} \frac{g_1}{g_2}\nabla g_2 \Big) \\[1ex]
	&\quad =  \frac{1}{4} \Big| \nabla\Big( f_2 + \frac{b}{a} g_2\Big) - \frac{f_1}{f_2 } \nabla f_2 - \frac{b}{a} \frac{g_1}{g_2} \nabla g_2\Big|^2\\[1ex]
	&\quad = \frac{1}{4} \Big| \Big( 1 - \frac{f_1}{f_2} \Big) \nabla f_2 + \frac{b}{a} \Big( 1 - \frac{g_1}{g_2} \Big) \nabla g_2 \Big|^2 \\[1ex]
	&\quad \le \frac{1}{2} \Big| \frac{f_1-f_2}{f_2} \nabla f_2 \Big|^2 + \frac{b^2}{2a^2} \Big| \frac{g_1-g_2}{g_2} \nabla g_2 \Big|^2\,,
\end{align*}
the last estimate resulting from Young's inequality, we are led to
\begin{equation}\label{TII}
\frac{T^2_{II}(t)}{a}\leq\frac{1}{2}\int_0^t \int_\0 \left[ \Big|\frac{f_1-f_2}{f_2 }\nabla f_2\Big|^2 +\frac{b^2}{a^2}\Big|\frac{g_1-g_2}{g_2 }\nabla g_2\Big|^2 \right]\, \mathrm{d}x \mathrm{d}s\,.
\end{equation}
On behalf of \eqref{eq:QE**}, \eqref{TI},  and \eqref{TII}  we conclude that 
\begin{align*}
H(u_1(t)|u_2(t))&\leq H(u_1^{in}|u_2^{in})+ \frac{b (ad-bc)}{ac}\int_0^t \int_\0 \Big|   \frac{g_1-g_2}{2g_2 }  \nabla g_2\Big|^2  \, \mathrm{d}x \mathrm{d}s\\[1ex]
&\quad +\frac{a}{2}\int_0^t \int_\0 \left[ \Big|\frac{f_1-f_2}{f_2 }\nabla f_2\Big|^2 +\frac{b^2}{a^2}\Big|\frac{g_1-g_2}{g_2 }\nabla g_2\Big|^2 \right]\, \mathrm{d}x \mathrm{d}s\,.
\end{align*}
Recalling \eqref{bounds}, we deduce that there exists a positive constant $C=C(a,b,c,d)$ such that
\begin{equation}\label{EX2}
H(u_1(t)|u_2(t))\leq H(u_1^{in}|u_2^{in})+ C \sigma^4 \int_0^t \int_\0 \Big[ |f_1-f_2|^2 + \frac{b}{c}|g_1-g_2|^2 \Big] \, \mathrm{d}x \mathrm{d}s  
\end{equation}
for all $t\in[0,T]$.
In view of the inequality
\begin{equation}\label{eq:L2}
x\ln \Big(\frac{x}{y}\Big)-(x-y)\geq \frac{1}{2} \frac{|x-y|^2}{\max\{x,y\}}\,, \qquad (x,y)\in[0,\infty)\times (0,\infty)\,,
\end{equation}
which follows from \cite[Lemma~18]{HJT22x}, it is not difficult to infer from \eqref{EX2}, by taking also into account the boundedness of $u_1$ and $u_2$ in $(0,T)\times\Omega$ provided by \eqref{bounds2}, that 
\begin{align}\label{EX3}
H(u_1(t)|u_2(t))&\leq H(u_1^{in}|u_2^{in}) +C \int_0^tH(u_1(s)|u_2(s))\, \mathrm{d}s\qquad\text{for all $t\in[0,T]$}\,.
\end{align}
This completes the proof of \eqref{DES}.

\appendix
 \section{A  version of the Lions-Magenes lemma}\label{Sec:A}

In this section we establish a version of the Lions-Magenes lemma, see Lemma~\ref{L:A1} below, which is used in the proof of Theorem~\ref{MT1} when differentiating the mapping
\[
\Big[t\mapsto\int_\Omega [f(t)\ln (f(t)+\eta)-f(t)]\, \mathrm{d}x\Big]:(0,T)\to\R\,,\qquad \text{with $\eta>0$\,,} 
\] 
 for some appropriate non-negative function $f$. Before stating the result, we note that the function~${\Phi(s) := s\ln{(s+\eta)} - s}$, $s\ge 0$, satisfies $\Phi''(s)=(s+2\eta)/(s+\eta)^2$, $s\ge 0$. Thus
\[
\|\Phi''\|_\infty<\infty\,.
\]

\begin{lemma}[Lions-Magenes lemma]\label{L:A1}
Let $\Omega\subset\R^n$ be a bounded  open set and~$\Phi\in {\rm C}^2(\R)$ satisfy $\|\Phi''\|_\infty<\infty.$
Assume that 
\[
f\in L_2((0,T),H^1(\0)) \cap W^1_2((0,T),H^1(\0)')\,. 
\]
Then 
\[
\Big[t\mapsto I(t):=\int_\Omega\Phi(f(t))\, \mathrm{d}x\Big]\in {\rm C}([0,T],\R)
\]
and for all $0\leq t_0\leq t\leq T$ we have
\begin{equation}\label{id1}
\int_\Omega\Phi(f(t))\, \mathrm{d}x-\int_\Omega\Phi(f(t_0))\, \mathrm{d}x=\int_{t_0}^t\Big\langle\partial_t f(\tau),\Phi'(f(\tau))\Big\rangle_{(H^1)',H^1} \mathrm{d}\tau\,.
\end{equation}
\end{lemma}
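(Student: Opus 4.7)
The plan is to reduce to smooth-in-time approximants by mollification, where the chain rule is elementary, and then pass to the limit using the hypothesis $\|\Phi''\|_\infty < \infty$. First, the Gelfand triple $H^1(\Omega) \hookrightarrow L_2(\Omega) \hookrightarrow H^1(\Omega)'$ together with the hypotheses on $f$ guarantees, by the classical Lions-Magenes lemma (see, e.g., \cite[Theorem~II.5.12]{BF13}), that $f$ admits a representative in ${\rm C}([0,T], L_2(\Omega))$. Writing $\Phi(a) - \Phi(b) = \int_0^1 \Phi'(b + \theta(a-b))(a-b)\, \mathrm{d}\theta$ and using the affine bound $|\Phi'(s)| \le |\Phi'(0)| + \|\Phi''\|_\infty |s|$, one arrives at the pointwise estimate
\[
|\Phi(a) - \Phi(b)| \le \big(|\Phi'(0)| + \|\Phi''\|_\infty (|a| + |b|)\big)|a-b|,
\]
so the Cauchy-Schwarz inequality combined with $|\Omega|<\infty$ delivers the continuity of $I$ on $[0,T]$.

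For the identity~\eqref{id1}, I would extend $f$ (say, by reflection) to a neighborhood of $[0,T]$ and denote by $f_\varepsilon$ its time-mollification by a standard mollifier $\rho_\varepsilon$. Then $f_\varepsilon \in {\rm C}^\infty([0,T], H^1(\Omega))$ with $f_\varepsilon \to f$ in $L_2((0,T), H^1(\Omega))$, $\partial_t f_\varepsilon \to \partial_t f$ in $L_2((0,T), H^1(\Omega)')$, and $f_\varepsilon \to f$ uniformly in ${\rm C}([0,T], L_2(\Omega))$. Since $f_\varepsilon$ is smooth in time, the classical chain rule yields
\[
\int_\Omega \Phi(f_\varepsilon(t))\, \mathrm{d}x - \int_\Omega \Phi(f_\varepsilon(t_0))\, \mathrm{d}x = \int_{t_0}^t \langle \partial_t f_\varepsilon(\tau), \Phi'(f_\varepsilon(\tau)) \rangle_{(H^1)', H^1}\, \mathrm{d}\tau,
\]
which is the counterpart of~\eqref{id1} for $f_\varepsilon$.

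The final step is to pass to the limit $\varepsilon \to 0$. The left-hand side converges by the continuity estimate established above, applied pointwise. For the right-hand side, the main technical point — and what I expect to be the core obstacle — is to establish $\Phi'(f_\varepsilon) \to \Phi'(f)$ in $L_2((0,T), H^1(\Omega))$. The $L_2$-convergence follows at once from the Lipschitz bound $|\Phi'(a) - \Phi'(b)| \le \|\Phi''\|_\infty |a-b|$ and the convergence of $f_\varepsilon$. For the gradients, using $\nabla \Phi'(u) = \Phi''(u)\nabla u$, I would decompose
\[
\Phi''(f_\varepsilon) \nabla f_\varepsilon - \Phi''(f) \nabla f = [\Phi''(f_\varepsilon) - \Phi''(f)] \nabla f + \Phi''(f_\varepsilon)(\nabla f_\varepsilon - \nabla f);
\]
the second summand is controlled by $\|\Phi''\|_\infty \|\nabla f_\varepsilon - \nabla f\|_{L_2}$, while the first, after extracting a subsequence along which $f_\varepsilon \to f$ almost everywhere, converges to zero in $L_2$ by dominated convergence, with dominant $2\|\Phi''\|_\infty |\nabla f| \in L_2$. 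Pairing this convergence with $\partial_t f_\varepsilon \to \partial_t f$ in $L_2((0,T), H^1(\Omega)')$ through the Cauchy-Schwarz estimate for the duality bracket then yields~\eqref{id1} in the limit and completes the proof.
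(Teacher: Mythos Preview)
Your proof is correct and follows essentially the same strategy as the paper: approximate $f$ by functions smooth in time, apply the chain rule at the approximate level, and pass to the limit using $\|\Phi''\|_\infty<\infty$ together with dominated convergence for the gradient term $\Phi''(f_\varepsilon)\nabla f_\varepsilon$. The only differences are cosmetic---you build the approximants by time mollification while the paper invokes the abstract density of ${\rm C}^\infty([0,T],H^1(\Omega))$ in $E_2(H^1(\Omega),H^1(\Omega)')$, and the paper records the chain rule for ${\rm C}^1$-in-time $L_2$-valued functions as a separate preparatory lemma rather than calling it ``classical.''
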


As we are lacking a precise reference for Lemma~\ref{L:A1}, we include below a proof for the sake of completeness.
 As a  first step, we establish  in Lemma~\ref{L:A0}  an auxiliary result which is used in the proof of Lemma~\ref{L:A1}.

\begin{lemma}\label{L:A0}
Let $\Omega\subset\R^n$ be a bounded  open set and let $f\in {\rm C}^1(\mathcal{I},L_2(\Omega))$, where $\mathcal{I}\subset \R$ is an interval.
Let further $\Phi\in {\rm C}^2(\R) $ satisfy $
\|\Phi''\|_\infty=:L<\infty$.
Then,
\[
\Big[t\mapsto I(t):=\int_\Omega\Phi(f(t))\, \mathrm{d}x\Big]\in {\rm C}^1(\mathcal{I},\R)
\]
and
\begin{equation}\label{z0} 
I'(t)=\frac{\mathrm{d}}{\mathrm{d}t} \int_\Omega\Phi(f(t))\, \mathrm{d}x= \int_\Omega\Phi'(f(t))\partial_t f(t)\, \mathrm{d}x,\qquad t\in\mathcal{I}\,.
\end{equation}
\end{lemma}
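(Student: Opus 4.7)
The plan is to verify the claimed chain rule in integral form by means of a first-order Taylor expansion, using the uniform bound on $\Phi''$ to control the remainder in terms of the $L_2$-modulus of continuity of $f$.

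First I would check that both sides of \eqref{z0} make sense. From $\|\Phi''\|_\infty = L < \infty$ and integration, one gets the pointwise bounds $|\Phi(s)| \le |\Phi(0)| + |\Phi'(0)||s| + Ls^2/2$ and $|\Phi'(s)| \le |\Phi'(0)| + L|s|$ for $s \in \R$. Since $\Omega$ is bounded and $f(t) \in L_2(\Omega)$, the first bound ensures $\Phi(f(t)) \in L_1(\Omega)$, so $I(t)$ is finite. The second bound ensures $\Phi'(f(t)) \in L_2(\Omega)$, so that the product with $\partial_t f(t) \in L_2(\Omega)$ belongs to $L_1(\Omega)$ by Cauchy--Schwarz, and the right-hand side of \eqref{z0} is well-defined.

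The core of the argument is the pointwise Taylor identity
\[
\Phi(f(t+h)) - \Phi(f(t)) = \Phi'(f(t))(f(t+h) - f(t)) + R(t,h)\,,
\]
valid a.e.\ in $\Omega$, where the integral form of the remainder combined with $\|\Phi''\|_\infty = L$ yields $|R(t,h)| \le L|f(t+h) - f(t)|^2/2$. Dividing by $h \ne 0$ and integrating over $\Omega$, I would treat the two resulting terms separately. The linear term converges to $\int_\Omega \Phi'(f(t))\partial_t f(t)\,\mathrm{d}x$ as $h \to 0$, since $h^{-1}(f(t+h) - f(t)) \to \partial_t f(t)$ in $L_2(\Omega)$ by the $C^1$-assumption and $\Phi'(f(t)) \in L_2(\Omega)$ is a fixed factor, so Cauchy--Schwarz applies. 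The remainder contributes at most $(L/2)\|f(t+h) - f(t)\|_{L_2}\cdot \|h^{-1}(f(t+h) - f(t))\|_{L_2}$; the first factor tends to zero by continuity of $f$ in $L_2(\Omega)$, while the second stays bounded as $h \to 0$, so this contribution vanishes. This yields the existence of $I'(t)$ together with formula \eqref{z0}.

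To finish, continuity of $I'$ on $\mathcal{I}$ follows because $t \mapsto \Phi'(f(t))$ is continuous from $\mathcal{I}$ into $L_2(\Omega)$, thanks to $\Phi'$ being Lipschitz with constant $L$ and $f \in {\rm C}(\mathcal{I},L_2(\Omega))$, and $t \mapsto \partial_t f(t)$ is continuous from $\mathcal{I}$ into $L_2(\Omega)$ by assumption; one further application of Cauchy--Schwarz then gives the continuity of the pairing. The one delicate point---indeed the place where the hypothesis $\|\Phi''\|_\infty < \infty$ is essential---is the quadratic control of the Taylor remainder: without it, no integrable majorant is available and the difference quotient of $I$ could fail to converge even when $f$ is $C^1$ into $L_2(\Omega)$.
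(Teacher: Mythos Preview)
Your proof is correct and follows essentially the same approach as the paper: both arguments expand $\Phi(f(t+h))-\Phi(f(t))$ to first order, use the bound $\|\Phi''\|_\infty=L$ to control the remainder, and conclude via Cauchy--Schwarz together with the $C^1(\mathcal{I},L_2(\Omega))$ regularity of $f$. The only cosmetic difference is that the paper writes the increment as $\int_0^1 \Phi'\big((1-s)f(t_0)+sf(t)\big)(f(t)-f(t_0))\,\mathrm{d}s$ and then splits off two error terms, whereas you invoke the quadratic Taylor remainder $|R|\le L|f(t+h)-f(t)|^2/2$ directly; the latter is slightly more economical but the content is the same.
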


\begin{proof}
We may assume without loss of generality that $\Phi(0)=\Phi'(0)=0$ (as the claim is obvious for affine functions). Then 
\begin{equation}
	|\Phi(r) - \Phi(s)| \le L (|r|+|s|) |r-s|\,, \quad |\Phi'(r)-\Phi'(s)| \le L |r-s|\,, \qquad (r,s)\in\mathbb{R}^2\,. \label{z1}
\end{equation}
In particular, since $\Phi(0)=\Phi'(0)=0$,
\[
|\Phi(f(t))|\leq L|f(t)|^2\qquad\text{and}\qquad|\Phi'(f(t))|\leq L|f(t)|\,, \qquad t\in\mathcal{I}\,,
\]
and it follows that
$$
\Phi(f(t))\in L_1(\Omega) \qquad\text{and}\qquad \Phi'(f(t)) \partial_t f(t) \in L_1(\Omega)\,, \qquad t\in\mathcal{I}\,.
$$
Let $t\neq t_0\in\mathcal{I}$. We then have
\begin{align*}
&\Big|\frac{I(t)-I(t_0)}{t-t_0}-\int_\Omega\Phi'(f(t_0))\partial_t f(t_0)\, \mathrm{d}x\Big|\\[1ex]
&\leq \int_\Omega\Big|\frac{\Phi(f(t))-\Phi(f(t_0))}{t-t_0}-\Phi'(f(t_0))\partial_t f(t_0)\Big|\, \mathrm{d}x\\[1ex]
&\leq\int_\Omega \int_0^1\Big|\Phi'((1-s)f(t_0)+sf(t))\frac{f(t)-f(t_0)}{t-t_0}-\Phi'(f(t_0))\partial_t f(t_0)\Big|\, \mathrm{d}s \mathrm{d}x\\[1ex]
&\leq J_1(t)+J_2(t)\,,
\end{align*}
where
\begin{align*}
J_1(t)&:=\int_\Omega \int_0^1\Big|\Phi'((1-s)f(t_0)+sf(t))\Big[\frac{f(t)-f(t_0)}{t-t_0}- \partial_t f(t_0)\Big]\Big|\, \mathrm{d}s \mathrm{d}x\,,\\[1ex]
J_2(t)&:=\int_\Omega \int_0^1\Big|[\Phi'((1-s)f(t_0)+sf(t)) -\Phi'(f(t_0))]\partial_t f(t_0)\Big|\, \mathrm{d}s \mathrm{d}x\,.
\end{align*}
By~\eqref{z1},  H\"older's inequality, and the regularity of $f$,
\begin{align*}
J_1(t)&\leq L(\|f(t_0)\|_2+\|f(t)\|_2)\Big\|\frac{f(t)-f(t_0)}{t-t_0}- \frac{df}{dt}(t_0)\Big\|_2\underset{t\to t_0}\to 0\,,\\[1ex]
J_2(t)&\leq L\|f(t)-f(t_0)\|_2\Big\|\partial_t f(t_0)\Big\|_2\underset{t\to t_0}\to 0\,.
\end{align*}
Therefore, $I$ is differentiable at $t_0$ and its derivative is given by~\eqref{z0}. 
It next readily follows from~\eqref{z1} and the regularity of $f$ that $\Phi'(f)$ and $\partial_t f$ both belong to~${\rm C}(\mathcal{I},L_2(\Omega))$, 
from which we deduce that $I'\in {\rm C}(\mathcal{I})$ with the help of H\"older's inequality.
\end{proof}

We now recall  a basic property which is used in the proof of Lemma~\ref{L:A1} below.  Let~${X,\,Y}$ be Banach spaces such that the embedding of $X$ in $Y$ is continuous and dense and let~${T>0}$. Then, ${\rm C}^\infty([0,T],X)$ is dense in 
$$
E_{2}(X,Y) := L_2((0,T),X) \cap W^1_2((0,T),Y),
$$
see, e.g., \cite[Lemma~II.5.10]{BF13}.

\begin{proof}[Proof of Lemma~\ref{L:A1}]
Since ${\rm C}^\infty([0,T],H^1(\Omega))$ is dense in $E_2(H^1(\0),H^1(\0)')$, there is a sequence $(f_n)_{n\ge 1} \in {\rm C}^\infty([0,T],H^1(\Omega))$ such that
\begin{equation}
\lim_{n\to\infty} \|f_n-f\|_{L_2((0,T),H^1(\Omega))} = \lim_{n\to\infty} \left\|\partial_t f_n - \partial_t f \right\|_{L_2((0,T),H^1(\Omega)')} = 0\,. \label{z2}
\end{equation}
Moreover, thanks to the continuous embedding  of $E_2(H^1(\0),H^1(\0)')$ in ${\rm C}([0,T],L_2(\Omega))$, see, e.g., \cite[Theorem~II.5.13]{BF13}, we deduce from~\eqref{z2} that
\begin{equation}
	\lim_{n\to\infty} \sup_{t\in [0,T]} \|f_n(t)-f(t)\|_2 = 0\,. \label{z3}
\end{equation}
Let $0\le t_0 \le t \le T$. By Lemma~\ref{L:A0} 
\begin{equation}
\int_\Omega\Phi(f_n(t))\, \mathrm{d}x- \int_\Omega\Phi(f_n(t_0))\, \mathrm{d}x=\int_{t_0}^t\Big\langle \partial_t f_n(\tau),\Phi'(f_n(\tau))\Big\rangle_{(H^1)',H^1}\, \mathrm{d}\tau\,. \label{z4}
\end{equation}
On the one hand, we infer from~\eqref{z1}, \eqref{z3}, and H\"older's inequality that
\begin{equation}
\lim_{n\to\infty} \int_\Omega\Phi(f_n(t))\, \mathrm{d}x = \int_\Omega\Phi(f(t))\, \mathrm{d}x \;\;\text{ and }\;\; \lim_{n\to\infty} \int_\Omega\Phi(f_n(t_0))\, \mathrm{d}x = \int_\Omega \Phi(f(t_0))\, \mathrm{d}x\,. \label{z5}
\end{equation} 
On the other hand, it readily follows from~\eqref{z1} and \eqref{z2} that
$$
\lim_{n\to\infty} \int_0^T \|\Phi'(f_n(\tau)) - \Phi'(f(\tau))\|_2^2\,\mathrm{d}\tau = 0\,.
$$
Moreover, the boundedness and continuity of $\Phi''$, \eqref{z2}, and Lebesgue's dominated convergence theorem entail that 
\[
\text{$\Phi'(f_n)\in L_2((0,T),H^1(\Omega))$\quad with\quad$\nabla \Phi'(f_n) = \Phi''(f_n) \nabla f_n$\,,\qquad $n\ge 1$\,,}
\] and
$$
\lim_{n\to\infty} \int_0^T \| \Phi''(f_n(\tau)) \nabla f_n(\tau) - \Phi''(f(\tau)) \nabla f(\tau) \|_2^2\, \mathrm{d}\tau = 0\,.
$$
Therefore,
\begin{equation*}
	\lim_{n\to\infty} \|\Phi'(f_n) - \Phi'(f) \|_{L_2((0,T),H^1(\Omega))} = 0\,.
\end{equation*}
Combining this convergence with~\eqref{z2}, leads us to 
\begin{equation}
\lim_{n\to\infty} \int_{t_0}^t \Big\langle \partial_t f_n(\tau),\Phi'(f_n(\tau))\Big\rangle_{(H^1)',H^1}\, \mathrm{d}\tau = \int_{t_0}^t \Big\langle \partial_t f(\tau),\Phi(f(\tau)) \Big\rangle_{(H^1)',H^1}\, \mathrm{d}\tau = 0\,.\label{z6}
\end{equation}
The identity~\eqref{id1} is then a direct consequence of~\eqref{z4}, \eqref{z5}, and~\eqref{z6}.
\end{proof}

\subsection*{Acknowledgement}
The authors gratefully acknowledge the support by the RTG 2339
 ``Interfaces, Complex Structures, and Singular Limits'' of the German Science Foundation (DFG).

\bibliographystyle{siam}
\bibliography{LM22}
\end{document}